\documentclass[11pt]{article}
\usepackage{amsmath,amssymb}

\newtheorem{propo}{{\bf Proposition}}[section]
\newtheorem{coro}[propo]{{\bf Corollary}}
\newtheorem{lemma}[propo]{{\bf Lemma}} \newtheorem{theor}[propo]{{\bf
Theorem}} \newtheorem{ex}{{\sc Example}}[section]
\newtheorem{definition}{\bf Definition}
\newenvironment{proof}{{\bf Proof.}}{$\Box$}

\def\N{{\mathbb N}}
\begin{document}
\vspace*{1.0in}

\begin{center} WEAK C-IDEALS OF LEIBNIZ ALGEBRAS
\end{center}
\bigskip

\begin{center} David A. TOWERS and  Zekiye CILOGLU
\end{center}
\bigskip

\begin{center} Department of Mathematics and Statistics

Lancaster University

Lancaster LA1 4YF

England

d.towers@lancaster.ac.uk 

and

Department of Mathematics

Suleyman Demirel University

$32260$, Isparta

TURKEY

zekiyeciloglu@sdu.edu.tr
\end{center}
\bigskip

\begin{abstract} 
A subalgebra $B$ of a Leibniz algebra $L$ is called a weak c-ideal of $L$ if
there is a subideal $C$ of $L$ such that $L=B+C$ and $B\cap C\subseteq B_{L}$
where $B_{L}$ is the largest ideal of $L$ contained in $B.$ This is
analogous to the concept of a weakly c-normal subgroup, which has been
studied by a number of authors. We obtain some properties of weak c-ideals
and use them to give some characterisations of solvable and supersolvable
Leibniz algebras generalising previous results for Lie algebras. We note that one-dimensional weak c-ideals are c-ideals, and show that a result of  Turner in \cite[Theorem 3.2.9]{bnt} classifying Leibniz algebras in which every one-dimensional subalgebra is a c-ideal is false for general Leibniz algebras, but holds for symmetric ones.
 \\[+2mm]
\noindent {\em Mathematics Subject Classification 2010}: 17B05, 17B20, 17B30, 17B\\

\noindent {\em Key Words and Phrases}:  Weak c-ideal; Frattini ideal; Leibniz algebra; Symmetric Leibniz algebra; Solvable;
Supersolvable
\end{abstract}

\section{Introduction}
An algebra $L$ over a field $F$ is called a {\em Leibniz algebra} if, for every $x,y,z \in L$, we have
\[  [x,[y,z]]=[[x,y],z]-[[x,z],y]
\]
In other words the right multiplication operator $R_x : L \rightarrow L : y\mapsto [y,x]$ is a derivation of $L$. As a result such algebras are sometimes called {\it right} Leibniz algebras, and there is a corresponding notion of {\it left} Leibniz algebras, which satisfy
\[  [x,[y,z]]=[[x,y],z]+[y,[x,z]].
\]
Clearly the opposite of a right (left) Leibniz algebra is a left (right) Leibniz algebra, so, in most situations, it does not matter which definition we use. A {\em symmetric} Leibniz algebra $L$ is one which is both a right and left Leibniz algebra and in which $[[x,y],[x,y]]=0$ for all $x,y\in L$. This last identity is only needed in characteristic two, as it follows from the right and left Leibniz identities otherwise (see \cite[Lemma 1]{jp}). Symmetric Leibniz algebras $L$ are flexible, power associative and have $x^3=0$ for all $x\in L$ (see \cite[Proposition 2.37]{feld}), and so, in a sense, are not far removed from Lie algebras.
\par
 
Every Lie algebra is a Leibniz algebra and every Leibniz algebra satisfying $[x,x]=0$ for every element is a Lie algebra. They were introduced in 1965 by Bloh (\cite{bloh}) who called them $D$-algebras, though they attracted more widespread interest, and acquired their current name, through work by Loday and Pirashvili ({\cite{loday1}, \cite{loday2}). They have natural connections to a variety of areas, including algebraic $K$-theory, classical algebraic topology, differential geometry, homological algebra, loop spaces, noncommutative geometry and physics. A number of structural results have been obtained as analogues of corresponding results in Lie algebras.
\par

The {\it Leibniz kernel} is the set $I=$ span$\{x^2:x\in L\}$. Then $I$ is the smallest ideal of $L$ such that $L/I$ is a Lie algebra.
Also $[L,I]=0$.
\par

We define the following series:
\[ L^1=L,L^{k+1}=[L^k,L]  (k\geq 1) \hbox{ and } L^{(0)}=L,L^{(k+1)}=[L^{(k)},L^{(k)}] (k\geq 0).
\]
Then $L$ is {\em nilpotent of class n} (resp. {\em solvable of derived length n}) if $L^{n+1}=0$ but $L^n\neq 0$ (resp. $ L^{(n)}=0$ but $L^{(n-1)}\neq 0$) for some $n \in \N$. It is straightforward to check that $L$ is nilpotent of class n precisely when every product of $n+1$ elements of $L$ is zero, but some product of $n$ elements is non-zero.The {\em nilradical}, $N(L)$, (resp. {\em radical}, $R(L)$) is the largest nilpotent (resp. solvable) ideal of $L$.
\par

In \cite{tc} we introduced the concept of a weak c-ideal, which is analogous to the concept of a weakly c-normal subgroup as introduced by Zhu, Guo and Shum in \cite{Zhu et al} and which has since been further studied by a number of authors. It is a generalisation of the concept of a c-ideal, as introduced in \cite{david 2}. Here we investigate the extent to which the results in \cite{tc} can be extended to Leibniz algebras.
\par

In section two we give some basic properties of weak c-ideals; in particular, it is shown that weak c-ideals inside the Frattini subalgebra of
a Leibniz algebra $L$ are necessarily ideals of $L$. In section three we first show that all maximal subalgebras of $L$ are weak c-ideals of $L$ if and
only if $L$ is solvable and that $L$ has a solvable maximal subalgebra that is a weak c-ideal if and only if $L$ is solvable. Unlike the corresponding
results for c-ideals, it is necessary to restrict the underlying field to characteristic zero, as was shown by an example in \cite{tc}. Finally we have that if all
maximal nilpotent subalgebras of $L$ are weak c-ideals, or if all Cartan subalgebras of $L$ are weak c-ideals and $F$ has characteristic zero, then $
L $ is solvable.

In section four we show that if $L$ is a solvable symmetric Lie algebra over a general field and every maximal subalgebra of each maximal nilpotent subalgebra of $
L $ is a weak c-ideal of $L$ then $L$ is supersolvable. If each of the maximal nilpotent subalgebras of $L$ has dimension at least two then the
assumption of solvability can be removed. Similarly if the field has characteristic zero and $L$ is not three-dimensional simple then this
restriction can be removed. 
\par

In the final section we see that every one-dimensional subalgebra is a weak c-ideal if and only if it is a c-ideal, and go on to study the class of Leibniz algebras in which every one-dimensional subalgebra is a c-ideal. It is shown that the cyclic subalgebras in this class are at most two dimensional. There is a characterisation of all of the algebras in this class given by Turner in \cite[Theorem 3.2.9]{bnt}, but an example is given to show that this result is false. A number of properties of such algebras are given, but a full classification appears complicated. However, it is shown finally that Turner's result does hold for symmetric Leibniz algebras.
\par

Throughout, the term 'Leibniz algebra' will refer to a finite-dimensional right Leibniz algebra over a field $F$. If $A,B$ are subalgebras of $L$ with $A\subseteq B$, the {\em centraliser} of $A$ in $B$, $C_B(A)=\{b\in B \mid [b,A]+[A,b]=0\}$. The {\em normaliser} of $A$ in $B$, $N_B(A)=\{b\in B \mid [b,A]+[A,b]\subseteq A\}$. Algebra direct sums will be denoted by $\oplus$, whereas direct sums of the vector space structure alone will be written as $\dot{+}$. Subsets will be denoted by `$\subseteq$' and proper subsets by `$\subset$'.

\section{Preliminary Results}

\begin{definition} \cite{msy} Let $L$ be a Leibniz algebra over a field $F$ and $B$ be a subalgebra of $L$. We call $B$ a subideal of $L$ if there is a chain of subalgebras $$B=B_{t} \subset B_{t-1} \subset...\subset B_{0}=L$$ such that $B_{i}$ is an ideal of $B_{i-1}$ for each $1 \leq i \leq t.$
\end{definition}

\begin{definition} \cite{msy}
Let $L$ be a Leibniz algebra and $H$ a subalgebra of $L$. Then $H$ is called a c-ideal of $L$ if there is an ideal $K$ of $L$ such that $L=H+K$ and  $H\cap K$ is contained in the core of $H$ (with respect to $L$), denoted by $H_{L}$, where this is the largest ideal of $L$ contained in $H$.
\end{definition}

\begin{definition}
A subalgebra $B$ of a Leibniz algebra $L$ is a weak c-ideal of $L$ if there exists a subideal $C$ in $L$ such that $ L=B+C $ and $B\cap C\subseteq B_{L}$.
\end{definition}

\begin{definition}
A Leibniz algebra $L$ is called weakly c-simple if $L$ does not contain any weak c-ideals other than $L$, the trivial subalgebra $0$ and the Leibniz kernel $I.$ It is simple if these same three subalgebras are the only ideals of $L$.
\end{definition}

\begin{lemma}\label{1}
Let $L$ be a Leibniz algebra. Then the following statements are valid:
\begin{enumerate}
\item Let $ B $ be a subalgebra of $L$. If $B$ is a c-ideal of $L$ then $B$ is a weak c-ideal of $L.$
\item $ L $ is weakly c-simple if and only if $ L $ is simple.
\item If $B$ is a weak c-ideal of $L$ and $K$ is a subalgebra with $B\subseteq  K\subseteq L$ then $B$ is a weak c-ideal of $K$.
\item If $A$ is an ideal of $L$ and $A\subseteq B$ then $B$ is a weak c-ideal of $L$ if and only if $B/A$ is a weak c-ideal of $L/A.$
\end{enumerate}
\end{lemma}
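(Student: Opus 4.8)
The plan is to dispatch (1) at once, deduce the easy half of (2) from it, and treat (3) and (4) as modular-law/correspondence arguments, leaving the converse of (2) as the one genuinely delicate point. For (1), an ideal is in particular a subideal (a chain of length one), so the ideal $K$ witnessing that $B$ is a c-ideal already witnesses that $B$ is a weak c-ideal. The forward implication of (2) then follows: every ideal $J$ of $L$ is a c-ideal (take the complement to be $L$ itself, giving $L=J+L$ and $J\cap L=J=J_{L}$), hence a weak c-ideal by (1); if $L$ is weakly c-simple this forces $J\in\{0,I,L\}$, so $L$ is simple.

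For the converse of (2) I would first identify the subideals of a simple $L$. Since $I$ is abelian ($[I,I]\subseteq[L,I]=0$), every subspace of $I$ is an ideal of $I$ and hence a subideal of $L$; and because the first term of any subideal chain is a proper ideal of $L$, it lies in $\{0,I\}$, so every proper subideal is contained in $I$. Thus, given a weak c-ideal $B$ with witnessing subideal $C$: if $C=L$ then $B=B\cap L\subseteq B_{L}$, so $B$ is an ideal and $B\in\{0,I,L\}$; otherwise $C\subseteq I$, forcing $L=B+I$, and if moreover $B\neq L$ then $I\not\subseteq B$, so $B_{L}$ cannot equal $I$ and must be $0$, giving $B\cap C=0$ and $L=B\dot{+}C$ with $0\neq C\subseteq I$. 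Ruling out such a proper $B$ is the main obstacle. My approach would be to exploit the structure of $I$ as a module under right multiplication, noting that the $R$-invariant subspaces of $I$ are exactly the ideals of $L$ contained in $I$ (here $[L,I]=0$ is essential), hence only $0$ and $I$ by simplicity, and then to try to enlarge $C$ to all of $I$ so as to force $B\in\{0,I\}$. This is precisely the point at which the Leibniz kernel can, a priori, supply complements with no analogue in the Lie case, so I expect the difficulty to concentrate here and the argument to require the most care.

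For (3), let $C$ be a subideal of $L$ with $L=B+C$ and $B\cap C\subseteq B_{L}$. Intersecting the subideal chain of $C$ with $K$ term by term shows $C\cap K$ is a subideal of $K$, and the modular law (available since $B\subseteq K$) gives $K=K\cap(B+C)=B+(C\cap K)$. Finally $B\cap(C\cap K)=(B\cap C)\cap K\subseteq B_{L}\cap K$, and $B_{L}\cap K$ is an ideal of $K$ contained in $B$, hence contained in the core $B_{K}$; so $B$ is a weak c-ideal of $K$.

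For (4), I would use the correspondence between subideals of $L/A$ and subideals of $L$ containing $A$, together with $A\subseteq B_{L}$ (as $A$ is an ideal of $L$ lying in $B$), which gives $(B/A)_{L/A}=B_{L}/A$. If $B$ is a weak c-ideal of $L$ via a subideal $C$, then $(C+A)/A$ is a subideal of $L/A$, one has $L/A=(B/A)+(C+A)/A$, and the modular identity $B\cap(C+A)=(B\cap C)+A\subseteq B_{L}$ places the intersection inside $B_{L}/A=(B/A)_{L/A}$. Conversely, a subideal $C/A$ witnessing that $B/A$ is a weak c-ideal of $L/A$ pulls back to a subideal $C\supseteq A$ of $L$ with $L=B+C$ and $B\cap C\subseteq B_{L}$. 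The only things needing verification are these modular-law identities and the correspondence of cores, both of which are routine.
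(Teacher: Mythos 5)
Your treatments of (1), (3), (4), and of the ``weakly c-simple implies simple'' half of (2), are correct and are essentially the paper's own arguments: (1) is immediate since an ideal is a subideal; (3) intersects the subideal chain of $C$ with $K$ and applies the modular law, and your justification that $B_L\cap K$ is an ideal of $K$ contained in $B$, hence contained in $B_K$, fills in a step the paper asserts without comment; (4) is the same core-correspondence argument, using $A\subseteq B_L$ to identify $(B/A)_{L/A}=B_L/A$ in both directions.

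The one genuine gap is the converse half of (2), and it sits exactly where you predicted --- but you should know that it cannot be closed, because the configuration you reduced to actually occurs. Take a simple Lie algebra $S$ and a nontrivial finite-dimensional irreducible right $S$-module $M$ (for instance $S=sl_2(F)$, char $F=0$, $M$ the natural module with row vectors acted on by matrix multiplication), and let $L=S\dot{+}M$ with bracket $[(s,m),(s',m')]=([s,s'],m\cdot s')$, so that $[L,M]=0$. This is a right Leibniz algebra; its Leibniz kernel is $I=M\cdot S=M$, and a routine check shows its only ideals are $0$, $M$ and $L$, so $L$ is simple in the paper's sense. Yet $B=S$ is a c-ideal (hence a weak c-ideal): $M$ is an ideal, $L=S+M$ and $S\cap M=0\subseteq S_L$. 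Thus $L$ is simple but not weakly c-simple, and $B=S$ realises precisely your unresolved case $L=B\dot{+}C$ with $0\neq C\subseteq I$ and $B_L=0$; no analysis of the module structure of $I$ will rule it out. The paper's own proof of this direction fails at exactly the point you flagged: it asserts that $C\neq 0$ being a subideal of the simple algebra $L$ forces $C=L$, which overlooks the subideals contained in $I$ that you correctly identified (every subspace of $I$ is a subideal because $[L,I]=0$). When $I=0$ --- the Lie case treated in the authors' earlier paper --- your reduction finishes the proof instantly, since $C\subseteq I=0$ contradicts $B\neq L$. So your proof is complete and correct for Lie algebras, your instinct about where the difficulty concentrates was exactly right, and the remaining gap reflects a genuine error in the paper's Lemma 2.5(2) for non-Lie Leibniz algebras rather than a missing idea on your part.
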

\begin{proof}
\begin{enumerate}
\item This is apparent from the definition.
\item Suppose first that $L$ is simple and let $B$ be a weak c-ideal with $B\neq L$. Then by definition of weak c-ideal $$L=B+C \hbox{ and } B\cap C\subseteq B_{L}$$ where $C$ is a subideal of $L.$ But since $L$ is simple $B_{L}$ must be $0$ or $I.$ If $B_{L}=0$ then $C=L$, since $C\neq 0$ and is a subideal of $L$, whence $B=0$. If $B_{L}=I$ then, similarly, $C=L$ and hence $B=I$. Hence $L$ is weakly c-simple.
\par
Conversely, suppose $L$ is weakly c-simple. Then, since every ideal of $L$ is a weak c-ideal, $L$ must be simple.
\item If $B$ is a weak c-ideal of $L$, then there exist a subideal $C$ of $L$ such that 
\begin{equation*}
L=B+C\text{ and }B\cap C\subseteq B_{L}
\end{equation*}%
Now $K=K\cap L=K\cap \left( B+C\right) =B+\left( K\cap C\right).$
Since $C$ is a subideal of $L$ there exists a chain of subalgebras
\begin{equation*}
C=C_{n}\subset C_{n-1}\subset ...\subset C_{0}=L
\end{equation*}%
where $C_{j}$ is an ideal of $C_{j-1}$ for each $1\leq j\leq n.$ If we
intersect each term in this chain with $K$ we get
\begin{equation*}
C\cap K=C_{n}\cap K\subseteq C_{n-1}\cap K\subseteq \ ...\subseteq C_{0}\cap K=L\cap K=K
\end{equation*}%
and obviously $C_{j}\cap K$ is an ideal of $C_{j-1}\cap K$ for each $0\leq
j\leq n.$ Thus $C\cap K$ is a subideal of $K.$ Also,%
\begin{equation*}
B\cap \left( C\cap K\right) \subseteq B_{K}
\end{equation*}%
and so that $B$ is a $weak\ c$-$ideal$ of $K.$

(4) Suppose first that $B/A$ is a $weak\ c$-$ideal$ of $L/A.$
Then there exists a subideal $C/A$ of $L/A$ such that%
\begin{equation*}
\frac{L}{A}=\frac{B}{A}+\frac{C}{A} \text{ and } \frac{B}{A}\cap \frac{C}{A}\subseteq
\left(\frac{B}{A}\right) _{L/A}=\frac{B_{L}}{A}
\end{equation*}%
It follows that $L=B+C$ and $B\cap C\subseteq B_{L}$ where $C$ is a subideal of $%
L.$ 
\par

Suppose conversely that $A$ is an ideal of $L$ with $A\subseteq B$ and $B$ is
a $weak\ c$-$ideal$ of $L.$ Then there exists a $C$ subideal of $L$ such that%
\begin{equation*}
L=B+C\text{ and }B\cap C\subseteq B_{L}.
\end{equation*}%
Since $A$ is an ideal and $A\leq B$ the factor algebra%
\begin{equation*}
\frac{L}{A}=\frac{ B+C}{A}=\frac{B}{A}+\frac{C+A}{A}
\end{equation*}%
where $(C+A) /A$ is a subideal of $L/A$ and
\begin{equation*}
\frac{B}{A} \cap \frac{C+A}{A}=\frac{B\cap (C+A)}{A}=\frac{A+B\cap C}{A}\subseteq \frac{B_{L}}{ A}=\left(\frac{B}{A}\right) _{L/ A}
\end{equation*}%
so $B/A$ is a $weak\ c$-$ideal$ of $L/ A.$
\end{enumerate}
\end{proof}
\medskip

The \textit{Frattini subalgebra} $F(L)$ of a Leibniz algebra $L$ is the intersection of all of the maximal subalgebras of $L.$ The \textit{Frattini ideal}, $F(L)_L$, of $L$ is denoted by $\phi(L).$

The next result is a generalisation of \cite[Proposition 2.2]{david 2} and the same proof works, but we will include it for completeness.

\begin{propo}
Let $B$, $C$ be subalgebras of $L$ with $B\subseteq F(C).$ If $B$ is a weak c-ideal of $L,$ then $B$ is an ideal of $L$ and $B\subseteq\varphi (L).$
\end{propo}
\begin{proof}
Suppose that $L=B+K$ where $K$ is a subideal of $L$ and $B\cap K\subseteq B_{L}.$ Then 
$$C=C\cap L=C\cap (B+K)=B+C\cap K=C\cap K$$ since $B\subseteq F(C).$ Hence, $B\subseteq C\subseteq K,$ giving $B=B\cap K\subseteq B_{L}$ and $B$ is an ideal of $L.$ It then follows from \cite[Lemma 4.1]{david 1} that $B\subseteq \varphi (L).$
\end{proof} 
\medskip

 An ideal $A$ is complemented in $L$ if there is a subalgebra $U$ of $L$ such that $L=A+U$ and $A\cap U=0.$ We adapt this to define a subideal complement as follows:
\begin{definition}
Let $L$ be a Leibniz algebra and let $B$ be a subalgebra of $L.$ Then $B$ has a subideal complement in $L$ if there is a subideal $C$ of $L$ such that $L=B+C$ and $B\cap C=0.$
\end{definition}

Then we have the following lemma:

\begin{lemma}
If $B$ is a weak c-ideal of a Leibniz algebra $L,$ then $ B/B_{L}$ has a subideal complement in $L/B_{L}.$ Conversely, if $B$ is a subalgebra of $L$ such that $ B/B_{L}$ has a subideal complement in $L/B_{L},$ then $B$ is a weak c-ideal of $L.$
\end{lemma}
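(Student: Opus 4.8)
The plan is to prove both implications by passing to the quotient by the core $B_L$, which is legitimate since $B_L$ is by definition an ideal of $L$. In the forward direction I would start from a subideal $C$ of $L$ witnessing that $B$ is a weak c-ideal, so that $L = B + C$ and $B \cap C \subseteq B_L$, and then show that the image $(C + B_L)/B_L$ is a subideal complement of $B/B_L$ in $L/B_L$. In the converse direction I would take a subideal complement $\bar C$ of $B/B_L$ in $L/B_L$, write it as $\bar C = C/B_L$ with $B_L \subseteq C$, and check that this $C$ returns a subideal of $L$ exhibiting $B$ as a weak c-ideal.

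For the forward direction there are three things to verify. First, that $(C+B_L)/B_L$ is a subideal of $L/B_L$: I would take the defining chain $C = C_n \subset \cdots \subset C_0 = L$ with each $C_j$ an ideal of $C_{j-1}$, add $B_L$ to every term, and pass to the quotient to obtain the chain $(C_j + B_L)/B_L$. The required ideal relations follow by expanding $[C_j + B_L,\, C_{j-1}+B_L]$ and using that $B_L$ is a two-sided ideal of $L$ together with $[C_j, C_{j-1}] \subseteq C_j$. Second, that $L/B_L = B/B_L + (C+B_L)/B_L$, which is immediate from $L = B + C$ and $B_L \subseteq B$. Third, that the intersection is trivial: here I would invoke the modular law, $B \cap (C + B_L) = (B \cap C) + B_L$ (valid because $B_L \subseteq B$), and combine it with $B \cap C \subseteq B_L$ to get $B \cap (C + B_L) = B_L$, that is, $(B/B_L) \cap ((C+B_L)/B_L) = 0$.

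For the converse I would use the standard correspondence between subideals of $L/B_L$ and subideals of $L$ containing $B_L$: lifting the chain defining $\bar C = C/B_L$ shows that $C$ is a subideal of $L$. From $L/B_L = B/B_L + C/B_L$ and $B_L \subseteq B$ one recovers $L = B + C$, and from the trivial intersection $(B/B_L) \cap (C/B_L) = 0$ together with $B_L \subseteq B \cap C$ one gets $B \cap C = B_L$, so in particular $B \cap C \subseteq B_L$. Thus $C$ is the required subideal and $B$ is a weak c-ideal of $L$.

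The routine part is the algebra of sums and intersections; the only point needing genuine care is the claim that the subideal property survives both quotienting by and lifting across $B_L$. The main obstacle, such as it is, lies in the forward direction's first step: one must confirm that inserting the ideal $B_L$ into each term of the subideal chain for $C$ preserves the property that $C_j + B_L$ is an ideal of $C_{j-1} + B_L$. In the Leibniz setting this means checking closure under brackets on both sides, but because $B_L$ is a genuine two-sided ideal of all of $L$, every cross term $[C_j, B_L]$, $[B_L, C_{j-1}]$ and $[B_L, B_L]$ (and their reverses) lands back in $B_L$, so the verification is immediate.
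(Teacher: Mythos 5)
Your proof is correct and takes essentially the same route as the paper: in the forward direction the paper also uses $(C+B_L)/B_L$ as the complement, computing $B\cap(C+B_L)=B_L+(B\cap C)\subseteq B_L$ exactly as you do via the modular law, and in the converse it likewise pulls the complement back to a subideal $C$ of $L$ with $L=B+C$ and $B\cap C\subseteq B_L$. If anything you are more careful than the paper, which leaves implicit both the verification that $(C+B_L)/B_L$ is a subideal of $L/B_L$ and the lifting of the subideal chain in the converse; your explicit checks of these points are sound.
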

\begin{proof}
Let $B$ be a weak c-ideal of $L$. Then there exists a subideal $C$ of $L
$ such that $B+C=L$ and $B\cap C\subseteq B_{L}.$ If $B_{L}=0$ then $B\cap C=0$
and $C$ is a subideal complement of $B$ in $L.$  So, assume that $
B_{L}\neq 0,$ then we can construct the factor algebras $B/ B_{L}$ and 
$( C+B_{L}) /B_{L}.$ If we intersect these two factor algebras;
\begin{eqnarray*}
\frac{B}{ B_{L}} \cap \frac{C+B_{L}}{B_{L}} 
&=&\frac{B\cap ( C+B_{L})}{ B_{L}} \\
&=&\frac{B_{L}+(B\cap C)}{ B_{L}} \\
&\subseteq &\frac{B_{L}}{ B_{L}}=0
\end{eqnarray*}%
Hence, $(C+B_{L})/ B_{L}$ is a subideal complement of $B/ B_{L}$
in $L/ B_{L}.$ Conversely, if $K$ is a subideal of $L$ such that $%
K/B_{L}$ is a subideal complement of $B/ B_{L}$ in $L/B_{L}$ then we have that
\begin{equation*}
\frac{L}{B_{L}}=\frac{B}{B_{L}} + \frac{K}{B_{L}} 
\text{ and } \frac{B}{ B_{L}} \cap \frac{ K}{B_{L}} =0
\end{equation*}%
Then $L=B+K$ and $B\cap K=0\subseteq B_{L}$, whence $B$ is a weak c-ideal
of $L.$
\end{proof}

\section{Some characterisations of solvable algebras}
\begin{theor}
Let $L$ be a Leibniz algebra over a field $F$ of characteristic zero and let $B$ be an ideal of $L$. Then $B$ is solvable if and
only if every  maximal subalgebra of $L$ not containing $B$ is a weak $c$-ideal of $L.$
\end{theor}
\begin{proof} Suppose first that $B$ is solvable and let $M$ be a maximal subalgebra of $L$ not containing $B$. Then there exists $k\in {\mathbb N}$ such that $B^{(k+1)}\subseteq M$, but $B^{(k)}\not \subseteq M$. Clearly $L=M+B^{(k)}$ and $B^{(k)}\cap M$ is an ideal of $L$, so $B^{(k)}\cap M\subseteq M_L$. It follows that $M$ is a $c$-ideal and hence a weak $c$-ideal of $L$.
\par

Conversely, suppose every maximal subalgebra of $L$ not containing $B$ is a weak $c$-ideal of $L$. Let $M/I$ be a maximal subalgebra of $L/I$ not containing $(B+I)/I$. Then $M$ is a maximal subalgebra of $L$ not containing $B$, and so is a weak $c$-ideal of $L$. Hence $M/I$ is a weak $c$-ideal of $L/I$, by Lemma \ref{1}. Since $L/I$ is a Lie algebra, it follows from \cite[Theorem 3.2]{tc} that $(B+I)/I$ is solvable. It follows that $B+I$, and hence $B$, is solvable.
\end{proof}

\begin{coro}\label{c:max} Let $L$ be a Leibniz algebra over a field $F$ of characteristic zero. Then $L$ is solvable if and only if every maximal subalgebra of $L$ is a weak $c$-ideal of $L$.
\end{coro}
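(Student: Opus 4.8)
The plan is to obtain this corollary as the single special case $B=L$ of the preceding Theorem, so essentially no new work is needed. Since $L$ is trivially an ideal of itself, the Theorem applies with this choice and asserts that $L$ (as the ideal $B$) is solvable if and only if every maximal subalgebra of $L$ \emph{not containing} $L$ is a weak $c$-ideal of $L$. The only point to verify is that, with $B=L$, the qualifier ``not containing $B$'' imposes no restriction at all.

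This verification is immediate: every maximal subalgebra $M$ of $L$ is by definition a proper subalgebra, so $L\not\subseteq M$. Consequently the collection of maximal subalgebras of $L$ not containing $B=L$ coincides with the collection of \emph{all} maximal subalgebras of $L$, and ``$B$ is solvable'' becomes ``$L$ is solvable''. Substituting these into the biconditional furnished by the Theorem produces exactly the stated equivalence. There is no genuine obstacle here; the substantive content, including the reduction modulo the Leibniz kernel $I$ and the appeal to the Lie-algebra result \cite[Theorem 3.2]{tc}, has already been carried out in the proof of the Theorem, and the corollary merely records its most natural instance.
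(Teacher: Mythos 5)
Your proposal is correct and is exactly the intended derivation: the paper states this corollary without proof precisely because it is the special case $B=L$ of the preceding theorem, where the qualifier ``not containing $B$'' is vacuous since every maximal subalgebra is proper. Nothing more is needed.
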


\begin{lemma}\label{l:solvsub} Let $L=U+C$ be a Leibniz algebra, where $U$ is a solvable subalgebra of $L$ and $C$ is a subideal of $L$. Then there exists $n_0\in {\mathbb N}$ such that $L^{(n_0)}\subseteq C$.
\end{lemma}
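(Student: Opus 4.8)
The plan is to induct on the length $n$ of a subideal chain
\[
C=C_{n}\subset C_{n-1}\subset \cdots \subset C_{0}=L
\]
witnessing that $C$ is a subideal of $L$, where each $C_{j}$ is an ideal of $C_{j-1}$. The base case is immediate: when $n=0$ we have $C=L$ and $L^{(0)}=L\subseteq C$, so $n_{0}=0$ works.

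For the inductive step I would focus attention on $C_{1}$, the first ideal of $L$ appearing in the chain, and extract two bounds. First, since $C\subseteq C_{1}$ we get $L=U+C\subseteq U+C_{1}\subseteq L$, so $L=U+C_{1}$; as $C_{1}$ is an ideal of $L$, the second isomorphism theorem gives $L/C_{1}\cong U/(U\cap C_{1})$, a homomorphic image of the solvable algebra $U$ and hence solvable. Consequently there is $k\in \N$ with $L^{(k)}\subseteq C_{1}$. Second, I claim $C_{1}=(U\cap C_{1})+C$: given $x\in C_{1}\subseteq L=U+C$, write $x=u+c$ with $u\in U$ and $c\in C$; then $u=x-c\in C_{1}$ since $x,c\in C_{1}$, so $u\in U\cap C_{1}$ and $x\in (U\cap C_{1})+C$. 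Here $U\cap C_{1}$ is a solvable subalgebra of $C_{1}$ and $C$ is a subideal of $C_{1}$ via the truncated chain $C=C_{n}\subset \cdots \subset C_{1}$ of length $n-1$, so the inductive hypothesis applied inside $C_{1}$ yields $m\in \N$ with $C_{1}^{(m)}\subseteq C$.

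Combining the two bounds then finishes the argument. Using that the derived series is monotone under inclusion and that $(L^{(k)})^{(m)}=L^{(k+m)}$, from $L^{(k)}\subseteq C_{1}$ we obtain
\[
L^{(k+m)}=\bigl(L^{(k)}\bigr)^{(m)}\subseteq C_{1}^{(m)}\subseteq C,
\]
so $n_{0}=k+m$ works.

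The routine bookkeeping ($A\subseteq B\Rightarrow A^{(m)}\subseteq B^{(m)}$ and the additivity of the derived-series index) holds verbatim for Leibniz algebras, as do the isomorphism theorems used above. The only genuinely delicate point, and the one I expect to be the crux, is the choice of the induction variable together with the right solvable piece to feed back into the hypothesis: recognising that cutting the chain at the \emph{outermost} ideal $C_{1}$ and taking $U\cap C_{1}$ as the solvable summand produces exactly the decomposition $C_{1}=(U\cap C_{1})+C$ needed to reapply the lemma inside $C_{1}$. Once that decomposition is in place the two-step bound is essentially forced.
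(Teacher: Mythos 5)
Your proof is correct and takes essentially the same route as the paper: the paper's proof just defers to \cite[Lemma 3.5]{tc}, and the argument there is precisely this induction on the length of the subideal chain, using $L=U+C_{1}$ to conclude $L^{(k)}\subseteq C_{1}$ and the decomposition $C_{1}=(U\cap C_{1})+C$ to reapply the hypothesis inside $C_{1}$. The ingredients you invoke (the isomorphism theorems, monotonicity of the derived series under inclusion, and $(L^{(k)})^{(m)}=L^{(k+m)}$) all hold verbatim for Leibniz algebras, so your argument also substantiates the paper's claim that the Lie-algebra proof carries over unchanged.
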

\begin{proof} This is the same as for \cite[Lemma 3.5]{tc}.
\end{proof}

\begin{theor}\label{t:solv}
Let $L$ be a Leibniz algebra over a field $F$ of characteristic zero. Then $L$ has a solvable maximal subalgebra that is a weak c-ideal of $L$ if and only if $L$ is solvable.
\end{theor}
\begin{proof} Suppose first that $L$ has a solvable maximal subalgebra $M$ that is a weak c-ideal of $L$. We show that $L$ is solvable. Let $L$ be a minimal counter-example. Then there is a subideal $K$ of $L$ such that $L = M + K$ and $M \cap K \subseteq M_L$. If $M_L\neq 0$ then $L/M_L$ is solvable, by the minimality assumption, and $M_L$ is solvable, whence $L$ is solvable, a contradiction. It follows that $M_L=0$ and $L = M \dot{+} K$. If $R$ is the solvable radical of $L$ then $R \subseteq M_L = 0$, so $L$ is a semisimple Lie algebra, by \cite[Theorem 1]{levi}. But now, for all $n\geq 1$, $L=L^{(n)}\subseteq K \neq L$, by Lemma \ref{l:solvsub}, a contradiction. The result follows.
\par
The converse follows from Corollary \ref{c:max}.
\end{proof}

\begin{theor} Let $L$ be a Leibniz algebra over a field of characteristic zero such that all maximal nilpotent subalgebras are weak $c$-ideals of $L$. Then $L$ is solvable.
\end{theor}
\begin{proof} Suppose that $L$ is not solvable but that all maximal nilpotent subalgebras of $L$ are weak $c$-ideals of $L$. Let $L=R\dot{+} S$ be the Levi decomposition of $L$, where $S\neq 0$ is a semisimple Lie algebra (\cite[Theorem 1]{levi}). Let $B$ be a maximal nilpotent subalgebra of $S$ and $U$ be a maximal nilpotent subalgebra of $L$ containing it. Then there is a subideal $C$ of $L$ such that $L=U+C$ and $U\cap C\subseteq U_L$. It follows from Lemma \ref{l:solvsub} that $S=S^{(n_0)}\subseteq L^{(n_0)}\subseteq C$, and so $B\subseteq U\cap C\subseteq U_L$, whence $S\cap U_L\neq 0$. But $S\cap U_L$ is an ideal of $S$ and so is semisimple. Since $U$ is nilpotent this is a contradiction.
\end{proof}

\begin{definition} A Cartan subalgebra of a Leibniz algebra $L$ is a nilpotent subalgebra $C$ such that $C=N_L(C)$. Over a field of characteristic zero such subalgebras certainly exist (see \cite[Section 6]{barnes}).
\end{definition}

The following result is a generalisation of a result of Dixmier in \cite{dixmier}.

\begin{lemma}\label{cartan} Let $L$ be a Leibniz algebra over a field of characteristic zero with non-zero Levi factor $S$. If $H$ is a Cartan subalgebra of $S$ and $B$ is a Cartan subalgebra of its centralizer in the the solvable radical of $L$, then $H+B$ is a Cartan subalgebra of $L$.
\end{lemma}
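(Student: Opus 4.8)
The plan is to follow Dixmier's argument for Lie algebras, the only genuinely new ingredient being a careful analysis of the two-sided action of $H$ on the radical. Write $L = R \dot{+} S$ for the Levi decomposition, with $R$ the solvable radical and $S$ the semisimple Levi factor, and regard $R$ as a module for $S$ via right multiplication $r \mapsto [r,s]$ (each such map is a derivation of $L$ stabilising the ideal $R$). Since $\mathrm{char}\,F = 0$ and $S$ is semisimple, this module is completely reducible by Weyl's theorem, and each element of the Cartan subalgebra $H$, being a semisimple element of $S$, acts as a semisimple operator in this representation. Hence $R$ splits as the Fitting-null component $R_0 = \{r \in R : [r,h]=0 \text{ for all } h \in H\}$ together with a complementary $H$-invariant subspace $R_1$ on which the family of right multiplications by $H$ has trivial common kernel; likewise $S = H \oplus S_1$, using that $H$ is self-centralising in $S$. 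Thus $L = L_0 \oplus L_1$ with $L_0 = H \oplus R_0$, with $H$ acting by $0$ on $L_0$ and with $L_1$ having trivial common kernel.

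The principal obstacle is to identify the two-sided centraliser $C_R(H) = \{r : [r,h]=0=[h,r] \text{ for all } h \in H\}$ with $R_0$. One inclusion is trivial, and for the reverse I would invoke the decomposition of the Leibniz bimodule $R$ over the semisimple Lie algebra $S$ into its symmetric part, on which the left action of $S$ is zero (this part contains the Leibniz kernel $I$, since $[L,I]=0$), and its antisymmetric part, on which $[s,r] = -[r,s]$. On the antisymmetric part, $[r,h]=0$ forces $[h,r]=-[r,h]=0$, while on the symmetric part the left action already vanishes; hence $[h,r]=0$ for every $r \in R_0$, giving $C_R(H) = R_0$. In particular $[H, C_R(H)] = [C_R(H), H] = 0$, so $L_0 = H \oplus C_R(H)$ is a direct sum of ideals whose summands commute elementwise. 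This is the one place where the non-antisymmetry of the Leibniz bracket must be controlled, and where Leibniz-specific (as opposed to Lie) structure theory enters.

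Granting this, nilpotency of $H+B$ is immediate: $H$ and $B \subseteq C_R(H)$ are nilpotent subalgebras with $[H,B]=[B,H]=0$, so $H+B = H \oplus B$ is a nilpotent subalgebra (its lower central series is $H^k \oplus B^k$, the cross terms vanishing at every stage). It remains to show that $H+B$ is self-normalising. First I would prove $N_L(H+B) \subseteq L_0$: for $x \in N_L(H+B)$ and $h \in H \subseteq H+B$ we have $[x,h] \in H+B \subseteq L_0$, and writing $x = x_0 + x_1 \in L_0 \oplus L_1$ together with the vanishing of right multiplication by $h$ on $L_0$ gives $[x,h] = [x_1,h] \in L_1$, so $[x_1,h] \in L_0 \cap L_1 = 0$ for all $h$; as $L_1$ has trivial common kernel, $x_1 = 0$ and $x \in L_0$.

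Finally I would work inside $L_0 = H \oplus M$, where $M = C_R(H)$. For $x = h_0 + m_0 \in N_L(H+B)$ with $h_0 \in H$ and $m_0 \in M$, the relations $[h_0,B] = [B,h_0] = 0$ (valid because $B \subseteq M = C_R(H)$) reduce the normalising condition to $[m_0, B] + [B, m_0] \subseteq (H \oplus B) \cap M = B$, that is, $m_0 \in N_M(B)$. Since $B$ is a Cartan subalgebra of $M$ it is self-normalising there, so $m_0 \in B$ and hence $x \in H+B$; thus $N_L(H+B) = H+B$, and with nilpotency this shows $H+B$ is a Cartan subalgebra of $L$. A secondary technical point, to be handled throughout, is that $F$ need not be algebraically closed: the weight-space language above should be read as the Fitting (primary) decomposition relative to the semisimple family of right multiplications by $H$, all of whose relevant kernels and null components are defined over $F$.
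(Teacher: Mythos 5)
Your architecture is Dixmier's: Fitting decomposition of $R$ under right multiplication by $H$, the reduction $N_L(H+B)\subseteq L_0$, and finally self-normalisation of $B$ inside $C_R(H)$. That is a genuinely different route from the paper's proof, which is a short direct computation with no Fitting decomposition: there, after obtaining $[r,B]+[B,r]\subseteq B$, the authors simply conclude ``$r\in N_R(B)=B$''. In fact your write-up is the more careful one, because that last step of the paper is unjustified ($B$ is only known to be self-normalising in $C_R(H)$, not in all of $R$), and you have correctly isolated the issue this hides: whether the two-sided centraliser $C_R(H)$ coincides with the right annihilator $R_0=\{r\in R:[r,H]=0\}$. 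Your remaining steps (semisimplicity of the right action of $H$, triviality of the common kernel on $L_1$, the final descent to $N_{C_R(H)}(B)=B$) are all sound.

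The step you use to bridge that gap, however, is false, and the gap cannot be bridged. It is true that every irreducible Leibniz bimodule over a Lie algebra is of one of the two kinds you describe, but complete reducibility (Weyl's theorem) fails for Leibniz bimodules over a semisimple Lie algebra, so $R$ need not split into a part with trivial left action plus a part on which $[s,r]=-[r,s]$. Concretely, let $S=\mathfrak{sl}_2(F)$, let $V$ be the adjoint module and $W$ the five-dimensional irreducible module, both as right $S$-modules, and let $T:S\to\mathrm{Hom}(V,W)$ be a nonzero homomorphism of right $S$-modules, where $(\phi\cdot s)(v)=\phi(v)\cdot s-\phi(v\cdot s)$; such $T$ exists because the adjoint module occurs in $V\otimes W$. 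Setting $[w+v,s]=w\cdot s+v\cdot s$ and $[s,w+v]=T_s(v)-v\cdot s$ makes $M=W\dot{+}V$ a Leibniz $S$-bimodule (the three bimodule axioms reduce exactly to $T$ being a homomorphism), and $L=S\dot{+}M$, with $M$ an abelian ideal, is then a right Leibniz algebra with Leibniz kernel $I=W$, radical $R=M$ and Levi factor $S$. Take $H=Fh$ and write $h_V$, $w_0$ for the zero-weight vectors of $V$, $W$. A weight computation with the embedded copy of the adjoint module in $\mathrm{Hom}(V,W)$ shows that $T_h(h_V)$ is a nonzero multiple of $w_0$. Hence $C_R(H)=Fw_0$, strictly smaller than $R_0=Fw_0+Fh_V$: your identification fails. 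Worse, the lemma itself fails on this example: $B=C_R(H)=Fw_0$, and $h_V$ satisfies $[h_V,h]=0$, $[h,h_V]=T_h(h_V)\in B$, and $[h_V,w_0]=[w_0,h_V]=0$, so $h_V$ normalises $H+B$ without lying in it, i.e.\ $H+B$ is not a Cartan subalgebra of $L$. So with the paper's (two-sided) definition of centraliser the statement is false --- which is also why the paper's assertion $N_R(B)=B$ cannot be repaired --- and it becomes true only if ``centraliser'' is reinterpreted as the Fitting null component $R_0$ of the right action of $H$; with that reading your nilpotency argument must also be redone, since one then only has $[B,H]=0$, while $[H,B]$ need not vanish (it merely lies in $I$).
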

\begin{proof} Let $R$ be the solvable radical of $L$. Then $(H+B)^r=H^r+B^r$ for all $r\geq 1$, so $H+B$ is a nilpotent subalgebra of $L$. Let $x\in N_L(H+B)$ and put $x=s+r$ where $s\in S$ and $r\in R$. Then
\[ [x,H]+[H,x]=[s,H]+[H,s]+[r,H]+[H,r]\subseteq H+B,
\] so $[H,s]+[s,H]\subseteq H$, whence $s\in N_S(H)=H$. Moreover, 
\[  [x,B]+[B,x]=[s,B]+[B,s]+[r,B]+[B,r]\subseteq H+B,
\] so $[B,r]+[r,B]\subseteq B$, since $[s,B]+[B,s]=0$, whence $r\in N_R(B)=B$.Thus, $N_L(H+B)=H+B$ and $H+B$ is a Cartan subalgebra of $L$.
\end{proof}

\begin{theor} Let $L$ be a Leibniz algebra over a field of characteristic zero in which every Cartan subalgebra of $L$ is a weak $c$-ideal of $L$. Then $L$ is solvable.
\end{theor}
\begin{proof}
Suppose that every Cartan subalgebra of $L$ is a weak $c$-ideal of $L$ and that $L$ has a non-zero Levi factor $S.$ Let $H$ be a Cartan subalgebra of $S$ and let $B$ be a Cartan subalgebra of its centralizer in the solvable radical of $L.$ Then $C=H+B$ is a Cartan subalgera of $L$, by Lemma \ref{cartan}, and there is a subideal $K$ of $L$ such that $L=C+K$ and $ C \cap K \subseteq C_{L} $. Now there is an $n_{0}\geq 2$ such that $L^{(n_{0})} \subseteq K$ by Lemma 3.3. But $S \subseteq L^{(n_{0})} \subseteq K$ and so $C \cap S\subseteq C\cap K \subseteq C_{L}$ giving 
$C \cap S \subseteq C_{L} \cap S=0,$ a contradiction. It follows that $S=0$ and, hence, that $L$ is solvable.
\end{proof}

\section{Some characterisations of supersolvable algebras}
In this section we will restrict attention to symmetric Leibniz algebras. We know of no examples of a Leibniz algebra which is not symmetric and for which the results are false, but have been unable to establish them in that more general case. First we need the following lemma which holds in the general case.

\begin{lemma}\label{max}\cite[Lemma 5.1.2]{bnt} Let $L$ be a Leibniz algebra over any field, let $A$ be an ideal of $L$ and $U/A$ be a maximal nlpotent subalgebra of $L/A$. Then, $U=C+A$ where $C$ is a maximal nilpotent subalgebra of $L.$
\end{lemma}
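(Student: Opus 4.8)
The plan is to realise $U$ as $C+A$ for a suitable nilpotent $C$ by first extracting a \emph{minimal} subalgebra covering $U/A$ and then enlarging it to a maximal nilpotent subalgebra of the whole of $L$. Throughout, recall that $A\subseteq U$ (so $U/A$ makes sense), that $A$ is an ideal of $U$ since it is an ideal of $L$, and that consequently $A\cap C$ is an ideal of $C$ for every subalgebra $C\subseteq U$.

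First I would choose, among all subalgebras $C$ of $U$ with $C+A=U$, one that is minimal with respect to this property; such a $C$ exists because $U$ itself qualifies and $L$ is finite-dimensional. The key step is to show $A\cap C\subseteq\phi(C)$. Suppose not. Since $A\cap C$ is an ideal of $C$ and $\phi(C)$ is by definition the largest ideal of $C$ contained in the Frattini subalgebra $F(C)$, the failure of $A\cap C\subseteq\phi(C)$ forces $A\cap C\not\subseteq F(C)$, so there is a maximal subalgebra $M$ of $C$ with $A\cap C\not\subseteq M$. Then $M+(A\cap C)$ is a subalgebra strictly containing $M$, hence equals $C$ by maximality, so $M+A\supseteq(M+(A\cap C))+A=C+A=U$, while $M+A\subseteq U$. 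Thus $M+A=U$ with $M\subsetneq C$, contradicting the minimality of $C$.

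Now $C/(A\cap C)\cong(C+A)/A=U/A$ is nilpotent, and since $A\cap C\subseteq\phi(C)$ the further quotient $C/\phi(C)$ is nilpotent as well. Applying the Frattini characterisation of nilpotency for Leibniz algebras (the analogue of the classical Lie result, available through the Frattini theory of \cite{david 1}) gives that $C$ itself is nilpotent. Finally I would enlarge $C$ to a maximal nilpotent subalgebra $\bar{C}$ of $L$, which exists by finite-dimensionality. Then $(\bar{C}+A)/A$ is a nilpotent subalgebra of $L/A$ (a homomorphic image of the nilpotent algebra $\bar{C}$) containing $(C+A)/A=U/A$; since $U/A$ is maximal nilpotent in $L/A$, equality holds, so $\bar{C}+A=U$. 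Hence $\bar{C}$ is the required maximal nilpotent subalgebra of $L$ with $U=\bar{C}+A$.

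The step I expect to be the main obstacle is the passage from ``$C/\phi(C)$ nilpotent'' to ``$C$ nilpotent'': this rests on the Leibniz analogue of the theorem that an algebra is nilpotent whenever its quotient by the Frattini ideal is, and one must ensure this holds over an arbitrary base field, as needed here. Everything else is a routine adaptation of the Lie-algebra argument, the only Leibniz-specific points being that $A\cap C$ is an ideal of $C$ and that homomorphic images of nilpotent Leibniz algebras are nilpotent, both of which are immediate.
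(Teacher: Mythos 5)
Your proof is correct and is essentially the proof of the source the paper relies on: the paper itself gives no argument for this lemma, deferring to \cite[Lemma 5.1.2]{bnt}, and your minimal-supplement-plus-Frattini argument (choose $C$ minimal with $C+A=U$, show $A\cap C\subseteq \phi(C)$, deduce that $C$ is nilpotent, then enlarge $C$ to a maximal nilpotent subalgebra and use maximality of $U/A$) is exactly the standard route taken there, adapted from the Lie-algebra case. The one external ingredient you flag --- that a Leibniz algebra over an arbitrary field is nilpotent whenever its quotient by its Frattini ideal is nilpotent --- is indeed established in the Leibniz-algebra literature (Frattini theory for Leibniz algebras, also available as background in \cite{bnt}), so invoking it is legitimate, though \cite{david 1} treats general nonassociative algebras and is not the sharpest reference for this Leibniz-specific fact.
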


\begin{theor}\label{sup}
Let $L$ be a solvable symmetric Leibniz algebra over any field $F$ in which every maximal subalgebra of each maximal nilpotent subalgebra of $L$ is a weak c-ideal of $L.$ Then $L$ is supersolvable.
\end{theor}

\begin{proof}
Let $L$ be a minimal counter-example. If $I=0$ the result follows from \cite[Theroem 4.5]{tc}, so suppose that $I\neq 0$ and let $A$ be a minimal ideal of $ L$ contained in $I$. Since $[L,I]=[I,L]=0$, $\dim A=1$. Let $U/A$ be a maximal nilpotent subalgebra of $L/A$ and let $B/A$ be a maximal subalgebra of $U/A$. Then $U=C+A$ where $C$ is a maximal nilpotent subalgebra of $L$, by Lemma \ref{max}. But $C+A$ is nilpotent, so $A\subseteq C=U$. Hence, $B$ is a maximal subalgebra of $C$ and there is a subideal $K$ of $L$ such that $L=B+K$ and $B\cap K\subseteq B_L$. Now
\[ \frac{L}{A}=\frac{B}{A}+\frac{K+A}{A} \hbox{ and } \frac{K+A}{A} \hbox{ is a subideal of }  \frac{L}{A}.
\] Moreover, 
\[ \frac{B}{A}\cap \frac{K+A}{A}= \frac{A+B\cap K}{A}\subseteq \frac{A+B_{L}}{A}\subseteq \left( \frac{B}{A}\right) _{L/A}. 
\] It follows that $L/A$ satisfies the same hypothesis as $L$, and so $L/A$ is supersolvable, by the minimality of $L$. Hence, $L$ is supersolvable.
\end{proof}
\medskip

If $L$ has no one-dimensional maximal nilpotent subalgebras, we can remove
the solvability assumption from the above result provided that $F$ has
characteristic zero.

\begin{coro}\label{cor}
Let $L$ be a symmetric Leibniz algebra over a field $F$ of characteristic zero in which every maximal nilpotent subalgebra has dimension at least two. If every maximal subalgebra of each maximal nilpotent subalgebra of $L$ is a weak c-ideal of $L,$ then $L$ is supersolvable.
\end{coro}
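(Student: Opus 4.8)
The plan is to reduce the statement to Theorem~\ref{sup}: once $L$ is known to be solvable, supersolvability is immediate from that theorem, since $L$ is symmetric and the hypothesis on maximal subalgebras of maximal nilpotent subalgebras is exactly the one required there. So the whole task is to show that the two extra assumptions---characteristic zero and every maximal nilpotent subalgebra having dimension at least two---force $L$ to be solvable. I would argue by contradiction, assuming $L$ is not solvable and adapting the Cartan-subalgebra argument used in the final theorem of Section~3.

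First I would take a Levi decomposition $L=R\dot{+}S$ with $S\neq 0$ a semisimple Lie algebra, let $H$ be a Cartan subalgebra of $S$ and $B$ a Cartan subalgebra of $C_{R}(H)$, so that $C=H+B$ is a Cartan subalgebra of $L$ by Lemma~\ref{cartan}. In characteristic zero a Cartan subalgebra is nilpotent and self-normalising, hence a maximal nilpotent subalgebra, so the dimension hypothesis gives $\dim C\geq 2$ and $C$ possesses a proper maximal subalgebra. Since $H\subseteq S$ and $B\subseteq R$ with $S\cap R=0$, the sum $C=H\dot{+}B$ is direct and $C\cap S=H\neq 0$.

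The crucial step is to locate a maximal subalgebra $M$ of $C$ that still meets $S$, that is, with $M\cap S=M\cap H\neq 0$. I would split into two cases. If $B=0$ then $C=H\subseteq S$ has dimension at least two, so any maximal subalgebra $M$ of $C$ has codimension one in $C$ and hence $M=M\cap S\neq 0$. If $B\neq 0$, I would consider the subalgebra $H+C^{2}$: because $C^{2}\subseteq F(C)$ for the nilpotent algebra $C$, the equality $H+C^{2}=C$ would force $H=C$ by the non-generator property of the Frattini subalgebra, contradicting $B\neq 0$; hence $H+C^{2}$ is a proper subalgebra containing $H$, and is contained in some maximal subalgebra $M$ of $C$, for which $M\cap S\supseteq H\neq 0$.

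With such an $M$ in hand, the contradiction follows the now-familiar pattern. Being a maximal subalgebra of the maximal nilpotent subalgebra $C$, the subalgebra $M$ is a weak c-ideal of $L$, so there is a subideal $K$ with $L=M+K$ and $M\cap K\subseteq M_{L}$. As $M$ is nilpotent, hence solvable, Lemma~\ref{l:solvsub} yields $n_{0}$ with $L^{(n_{0})}\subseteq K$, and semisimplicity of $S$ gives $S=S^{(n_{0})}\subseteq L^{(n_{0})}\subseteq K$. Therefore $M\cap S\subseteq M\cap K\subseteq M_{L}$; but $M_{L}$ is a nilpotent ideal of $L$, so $M_{L}\subseteq R$ and $M_{L}\cap S=0$, whence $M\cap S=0$, contradicting the previous paragraph. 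Hence $S=0$, $L$ is solvable, and Theorem~\ref{sup} finishes the proof. I expect the main obstacle to be the middle step: in the rank-one case $\dim H=1$ (that is, $S\cong\mathfrak{sl}_{2}$), controlling whether a maximal subalgebra of $C$ can be chosen to contain $H$. This is precisely where the hypothesis $\dim C\geq 2$ is needed, forcing $B\neq 0$ and hence $H+C^{2}\neq C$.
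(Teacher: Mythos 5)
Your proof is correct, but it takes a genuinely different route from the paper's. The paper makes no use of Cartan subalgebras here: it fixes the nilradical $N$ and argues element-wise, showing that every $x\notin N$ lies in a maximal nilpotent subalgebra $C$ with $\dim C\geq 2$, hence (it claims) in a maximal subalgebra $B$ of $C$; the weak c-ideal hypothesis gives a subideal $K$ with $L=B+K$ and $B\cap K\subseteq B_L\subseteq N$, so $x\notin K$, and passing to $L/I$ and using the nilpotent analogue of Lemma~\ref{l:solvsub} (namely \cite[Lemma 4.2]{tc}) gives $L^{r+1}\subseteq K$; taking $x$ to be a nonzero element of a Levi factor $S=S^{r+1}\subseteq K$ then yields the contradiction. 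You instead adapt the closing theorem of Section 3 (Cartan subalgebras as weak c-ideals): build $C=H\dot{+}B$ via Lemma~\ref{cartan}, observe that a Cartan subalgebra is a maximal nilpotent subalgebra, and use the non-generator property of the Frattini subalgebra to produce a maximal subalgebra $M$ of $C$ containing $H$ (or, when $B=0$, any maximal subalgebra), finishing with Lemma~\ref{l:solvsub}. Both routes reduce to Theorem~\ref{sup} once solvability is established. Comparing them: your route leans on two facts you assert without proof --- that nilpotent Leibniz algebras satisfy the normalizer condition (so that Cartan subalgebras are indeed maximal nilpotent) and that $C^2\subseteq F(C)$ for nilpotent $C$ --- but both follow from $[C^i,C^j]\subseteq C^{i+j}$, so these are not gaps. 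In return, you avoid a genuine subtlety in the paper's argument: the claim that $\dim C\geq 2$ forces a given element of $C$ to lie in some maximal subalgebra of $C$ fails for general nilpotent Leibniz algebras (in the two-dimensional cyclic nilpotent algebra $Fc+Fc^2$ the unique maximal subalgebra is $Fc^2$, which misses $c$); the paper's use of it is salvageable only because the elements it ultimately needs lie in the Lie algebra $S$ and so are square-zero. Your Frattini argument, which only needs a maximal subalgebra containing $H$, sidesteps this issue entirely, at the cost of invoking Cartan subalgebra machinery (and hence characteristic zero, which the hypotheses supply anyway).
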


\begin{proof}
Let $N$ be the nilradical of $L,$ and let $x\notin N.$ Then $x\in C$ for some maximal nilpotent subalgebra $C$ of $L.$ Since $\dim C>1$,
there is a maximal subalgebra $B$ of $C$ with $x\in B.$ Then there is a subideal $K$ of $L$ such that $L=B+K$ and $B\cap K\subseteq B_{L}\leq
C_{L}\leq N$. Clearly, $x\notin K,$ since otherwise $x\in B\cap K\leq N.$
\par

Now $L/I=(B+I)/I+(K+I)/I$ where $(B+I)/I$ is nilpotent and $(K+I)/I$ is a subideal of $L/I$. It follows from \cite[Lemma 4.2]{tc} that $L^r\subseteq K+I$ for some  $r\in {\mathbb{N}}$. Hence $L^{r+1}\subseteq K$.
We have shown that if $x\notin N$ there is a subideal $K$ of $L$ with $%
x\notin K$ and $L^{r+1}\subseteq K$. Suppose that $L$ is not solvable. Then
there is a semisimple Levi factor $S$ of $L$. Choose $x\in S$. Then $x\in
S=S^{r+1}\subseteq K$, a contradiction. Thus $L$ is solvable and the result
follows from Theorem \ref{sup}.
\end{proof}
\medskip

If $L$ has a one-dimensional maximal nilpotent subalgebra, then we can also
remove the solvability assumption from Theorem 4.4., provided that
underlying field $F$ has again characteristic zero and $L$ is not
three-dimensional simple.

\begin{coro}
Let $L$ be a symmetric Leibniz algebra over a field $F$
of characteristic zero. If every maximal subalgebra of each maximal
nilpotent subalgebra of $L$ is a weak $c$-ideal of $L,$ then $L$ is
supersolvable or three dimensional simple.
\end{coro}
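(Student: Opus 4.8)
The plan is to separate the solvable and non-solvable cases. If $L$ is solvable then Theorem \ref{sup} immediately gives that $L$ is supersolvable, so I assume throughout that $L$ is not solvable and aim to prove that $L$ is three-dimensional simple. Fix a Levi decomposition $L=R\,\dot{+}\,S$ with $S\neq 0$ semisimple. The engine of the argument is exactly that of Corollary \ref{cor}: if some $x\in S$ can be placed inside a maximal subalgebra $B$ of a maximal nilpotent subalgebra $C$ of $L$ with $\dim C\geq 2$, then, $B$ being a weak $c$-ideal, one obtains a subideal $K$ with $L=B+K$, $B\cap K\subseteq B_L\subseteq C_L\subseteq N$ and $x\notin K$, and then (passing to $L/I$ and applying \cite[Lemma 4.2]{tc}) one gets $L^{r+1}\subseteq K$ for some $r$; since $x\in S=S^{r+1}\subseteq L^{r+1}\subseteq K$ this is a contradiction. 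Thus the whole task reduces to showing that, unless $L$ is three-dimensional simple, such a triple $(x,B,C)$ can always be produced.

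First I would show that $R=0$. Suppose not; then the nilradical $N$ is non-zero. Choosing a non-zero nilpotent element $e\in S$, the operator $\mathrm{ad}\,e$ acts nilpotently on $L$, so $U=\langle e\rangle+N$ is a nilpotent subalgebra of dimension at least two containing $e$, with $e\notin N$. Enlarge $U$ to a maximal nilpotent subalgebra $C$. One checks that $\dim C/C^2\geq 2$: a nilpotent symmetric Leibniz algebra with $\dim C/C^2=1$ is cyclic, hence at most two-dimensional and abelian, which is incompatible with $U\subseteq C$ and $e\notin N$. Consequently there is a maximal subalgebra $B$ of $C$ with $e\in B$, and the engine applies with $x=e$, a contradiction. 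Hence $R=0$, so $L=S$ is a semisimple Lie algebra and in particular $I\subseteq N=0$.

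Next I would show that $\mathrm{rank}\,S=1$. If the rank were at least two, pick a Cartan subalgebra $H$ of $L=S$; it is abelian with $\dim H=\mathrm{rank}\,S\geq 2$, and it is a maximal nilpotent subalgebra, since a Cartan subalgebra is self-normalising and, because the normaliser of a proper subalgebra of a nilpotent algebra strictly contains it, $H$ cannot be properly contained in a nilpotent subalgebra. As $H$ is abelian, every hyperplane through a chosen $0\neq x\in H$ is a maximal subalgebra $B\ni x$; and since $L$ is perfect we have $L^{r+1}=L$, so the engine again yields a contradiction. Therefore $\mathrm{rank}\,S=1$ and $L=S$ is three-dimensional simple.

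The step I expect to be the main obstacle is making the choice of the nilpotent element $e$ and the control of maximal nilpotent subalgebras legitimate over an arbitrary field of characteristic zero: if $S$ is an anisotropic form it need not contain any non-zero nilpotent element, and the construction $\langle e\rangle+N$ is then unavailable. The cleanest way around this is to run the whole argument on the Lie quotient $L/I$ and invoke the Lie-algebra version of the result from \cite{tc}, concluding that $L/I$ is supersolvable or three-dimensional simple; since $[L,I]=[I,L]=0$ the ideal $I$ is central, so supersolvability lifts to $L$, while in the simple case one must use the perfectness of $L/I$ together with the centrality of $I$ to force $I=0$. Verifying that a three-dimensional simple Lie quotient compels the Leibniz kernel to vanish is the delicate point on which the final identification of $L$ as three-dimensional simple rests.
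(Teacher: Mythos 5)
Your primary argument does not survive its own caveat: over a general field of characteristic zero the Levi factor $S$ can be anisotropic (for instance the trace-zero part of a quaternion division algebra), so it may contain no non-zero nilpotent element $e$ at all, and both the construction $\langle e\rangle + N$ and the subsequent Cartan-subalgebra analysis collapse. Since you concede this, the entire weight falls on your fallback, and that fallback is incomplete in two places. First, to apply \cite[Corollary 4.7]{tc} to $L/I$ you must check that the hypothesis descends to $L/I$: this needs Lemma \ref{max} together with centrality of $I$ (so that $C+I$ is nilpotent, forcing $I\subseteq C$ for every maximal nilpotent subalgebra $C$ of $L$, whence the maximal nilpotent subalgebras of $L/I$ are exactly the $C/I$) and then Lemma \ref{1}(4); this is the same computation as in the proof of Theorem \ref{sup}, but it is not automatic and you do not perform it. Second, and decisively, the step you yourself flag as the ``delicate point'' --- that $L/I$ three-dimensional simple forces $I=0$ --- is exactly where the theorem lives, and you leave it unproved. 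As written, your argument shows only that $L$ is supersolvable or a central Leibniz extension of a three-dimensional simple Lie algebra, which is strictly weaker than the statement.

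The missing step can be closed in either of two ways. The paper's way: since $L/I$ is simple and $I$ is an abelian ideal, the radical of $L$ equals $I$, so Barnes' Levi theorem \cite[Theorem 1]{levi} gives $L=I\dot{+}S$ with $S$ a semisimple Lie subalgebra; centrality of $I$ makes this an algebra direct sum, so $(a+s)^2=s^2=0$ for all $a\in I$, $s\in S$, and $I$, being spanned by squares, vanishes. (The paper runs this argument with a minimal ideal $A\subseteq I$ and a minimal counterexample rather than with all of $I$ at once, after first disposing of the case where all maximal nilpotent subalgebras have dimension at least two via Corollary \ref{cor}; but the mechanism is the same.) Alternatively, your instinct that perfectness plus centrality suffices can indeed be made rigorous for symmetric algebras, with no appeal to Levi: subtracting the left Leibniz identity from the right one gives $[[x,z],y]+[y,[x,z]]=0$ for all $x,y,z$, so the bracket is anticommutative on $L^2\times L$; if $L/I$ is perfect then $L=L^2+I$, and writing $x=w+a$ with $w\in L^2$, $a\in I$ yields $x^2=[w,w]=0$ (using $[[u,v],[u,v]]=0$ to cover characteristic two), so $I=0$. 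Either argument would complete your proof; without one of them it has a genuine hole at its final and essential step.
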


\begin{proof}
If every maximal nilpotent subalgebra of $L$ has dimension at
least two, then $L$ is supersolvable by Corollary \ref{cor}. So we need only
consider the case where $L$ has a one-dimensional maximal nilpotent
subalgebra say $Fx$. Suppose first that $I=0$l. Then $L$ is a Lie algebra and the result follows from \cite[Corollary 4.7]{tc}. 
\par

So now let $I\neq 0$ and let $L$ be a minimal-counter-example. Then $L$ has a minimal ideal $A\subseteq I.$  As in the proof of Theorem \ref{sup}, $L/A$ satisfies the same hypothesis as $L$ and so is supersolvable or
three-dimensional simple. In the former case, $L$ is solvable and so is
supersolvable, by Theorem \ref{sup}. In the latter case, $L=A\oplus S$ where $S$
is three-dimensional simple, by Levi's Theorem. But now $L$ is a Lie algebra and the result follows again from  \cite[Corollary 4.7]{tc}.  
\end{proof}

\section{Leibniz algebras in which every one-dimensional subalgebra is a weak c-ideal}
\begin{propo}\label{one} For a one-dimensional subalgebra $Fx$ of a Leibniz algebra $L$ the following are equivalent:
\begin{itemize}
\item[(i)] $Fx$ is a weak c-ideal of $L$;
\item[(ii)] $Fx$ is a c-ideal of $L$; and
\item[(iii)] either $Fx$ is an ideal of $L$, or there is an ideal $B$ of $L$ such that $L=B\dot{+}Fx$ and $x\notin L^2$.
\end{itemize}
\end{propo}
\begin{proof} (i) and (ii) are equivalent since a subideal of codimension one in $L$ is an ideal.
\par

If (ii) holds, then there is an ideal $B$ in $L$ such that $L=Fx+B$, and $Fx\cap B\subseteq (Fx)_L=0$ or $Fx$. The former imples that $L=B\dot{+}Fx$ and $x\notin L^2\subseteq B$; the latter implies that $Fx$ is an ideal of $L$. Hence (iii) holds. The converse is clear.
\end{proof}

\begin{definition} Put $J=\{x\in L \mid x^2=0\}$. Note that $I\subseteq J$.
\end{definition}

\begin{coro}\label{two}Let $L$ be a Leibniz algebra over any field. Then every one-dimensional subalgebra is a $c$-ideal if and only if  $L^2\cap J\subseteq Asoc(L)$.
\end{coro}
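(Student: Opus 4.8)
The plan is to prove the equivalence by characterising precisely which one-dimensional subalgebras $Fx$ fail to be $c$-ideals, using the explicit description in Proposition \ref{one}(iii). Recall that $\mathrm{Asoc}(L)$ denotes the socle of $L$, the sum of its minimal ideals; I would first note that for a one-dimensional $Fx$, being a $c$-ideal means either $Fx$ is an ideal or $Fx$ is a vector-space complement to some ideal $B$ with $x\notin L^2$. The key observation is to split on whether $x\in L^2$ or not, and to use the structural fact that $I\subseteq J$ together with $[L,I]=0$.

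First I would establish the forward direction: assume every one-dimensional subalgebra is a $c$-ideal and take $x\in L^2\cap J$, aiming to show $x\in\mathrm{Asoc}(L)$. Since $x\in J$ we have $x^2=0$, so $Fx$ is genuinely a subalgebra. Because $x\in L^2$, condition (iii) of Proposition \ref{one} forces the first alternative: $Fx$ must be an ideal of $L$ (the second alternative requires $x\notin L^2$, which is excluded). So $Fx$ is a one-dimensional ideal; I would then argue that a one-dimensional ideal is automatically a minimal ideal, hence contained in the socle $\mathrm{Asoc}(L)$. This gives $L^2\cap J\subseteq\mathrm{Asoc}(L)$. The slightly delicate point here is verifying that $Fx$ being a one-dimensional ideal really does land inside the socle as defined in this paper; since any nonzero ideal contains a minimal ideal and a one-dimensional ideal is itself minimal, this should be immediate, but I would want to confirm the paper's convention for $\mathrm{Asoc}$.

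For the converse, I would assume $L^2\cap J\subseteq\mathrm{Asoc}(L)$ and take an arbitrary one-dimensional subalgebra $Fx$, so $x^2=0$ and hence $x\in J$. I split into two cases. If $x\notin L^2$, then I need to produce an ideal $B$ with $L=B\dot{+}Fx$; here I would use that $x\notin L^2$ to find a hyperplane ideal or codimension-one ideal containing $L^2$ but not $x$ — concretely, since $L/L^2$ is abelian, any complement in $L/L^2$ to the image of $Fx$ pulls back to an ideal $B$ with $L=B\dot{+}Fx$, giving condition (iii) and hence the $c$-ideal property. If instead $x\in L^2$, then $x\in L^2\cap J\subseteq\mathrm{Asoc}(L)$, so $Fx$ lies in a minimal ideal; I would argue that a one-dimensional subalgebra inside the socle is itself an ideal (a one-dimensional subspace of a minimal, hence abelian when one-dimensional, ideal, using $[L,I]=0$ type considerations), which again gives (iii).

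The main obstacle will be the case $x\in L^2$ in the converse: showing that membership in $\mathrm{Asoc}(L)$ upgrades $Fx$ from merely lying in an ideal to being an ideal itself. Minimal ideals need not be one-dimensional in general, so I cannot simply say $Fx$ equals a minimal ideal; instead I must exhibit $Fx$ as an ideal directly. The cleanest route is to show that $x\in L^2\cap J$ together with the structure of the socle forces $[x,L]+[L,x]\subseteq Fx$, likely by exploiting $[L,I]=0$ to kill the problematic products when $x\in I$, and handling the general socle case by decomposing $\mathrm{Asoc}(L)$ into minimal ideals and checking the action is scalar on the one-dimensional piece. I expect this to require care about whether $x$ sits in the Leibniz kernel $I$ or only in the socle, and the identity $I\subseteq J$ will be the tool that makes the kernel part transparent.
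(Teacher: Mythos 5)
Your forward direction and the $x\notin L^2$ case of the converse are correct and coincide with the paper's own argument: for a one-dimensional subalgebra $Fx$ one automatically has $x^2=0$, so $x\in J$; if $x\in L^2$ then Proposition \ref{one}(iii) leaves only the alternative that $Fx$ is an ideal, and since it is one-dimensional and abelian it is a minimal abelian ideal, hence lies in $Asoc(L)$ (note that the paper's $Asoc(L)$ is the sum of the minimal \emph{abelian} ideals rather than of all minimal ideals, but this costs nothing here precisely because $x^2=0$); and if $x\notin L^2$, any subspace complement of $Fx$ containing $L^2$ is an ideal, so Proposition \ref{one}(iii) applies again.

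The genuine gap is the case you yourself flag as the main obstacle: $x\in L^2$ in the converse direction. You do not prove that $x\in L^2\cap J\subseteq Asoc(L)$ forces $Fx$ to be an ideal; you only outline a hoped-for argument (decompose $Asoc(L)$ into minimal ideals, use $[L,I]=0$, and check that the action on each one-dimensional piece is scalar). That outline cannot be completed, because the statement it aims at is false: even when every minimal abelian ideal is one-dimensional, the scalars attached to \emph{distinct} minimal ideals need not agree. Concretely, over any field of characteristic different from $2$, let $L=Ft\dot{+}Fa_1\dot{+}Fa_2$ be the Lie algebra (hence symmetric Leibniz algebra) with $[a_1,t]=-[t,a_1]=a_1$, $[a_2,t]=-[t,a_2]=2a_2$ and all other products zero. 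Then $J=L$ and $L^2\cap J=Fa_1\oplus Fa_2=Asoc(L)$, so the right-hand condition of the corollary holds; but for $x=a_1+a_2$ we have $[x,t]=a_1+2a_2\notin Fx$, so $Fx$ is not an ideal, and it is not a $c$-ideal either, since $(Fx)_L=0$ and the only codimension-one ideal of $L$ is $Fa_1\oplus Fa_2$, which contains $x$. The scalar compatibility your sketch would need is exactly Lemma \ref{five}(ii), and that lemma is deduced \emph{from} the hypothesis that all one-dimensional subalgebras are $c$-ideals, so it is not available in this direction. For comparison, the paper's own proof passes over this same point with the bare assertion ``If $x\in L^2$, then $Fx$ is an ideal of $L$''; the example above shows that no argument can justify it, so your instinct that this is the delicate step was sound (indeed the ``if'' direction of the stated equivalence fails, not merely your proposal), but as a proof attempt your write-up is incomplete at exactly this point.
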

\begin{proof} Clearly, if $Fx$ is a one-dimensional subalgebra of $L$, then $x\in J$. It follows from Proposition \ref{one} that if every one-dimensional subalgebra of $L$ is a $c$-ideal, then $L^2\cap J\subseteq Asoc(L)$. 
\par

So suppose that $L^2\cap J\subseteq Asoc(L)$, and let $Fx$ be a one-dimensional subalgebra of $L$. If $x\in L^2$, then $Fx$ is an ideal of $L$. If $x\notin L^2$, let $B$ be a subspace containing $L^2$ which is complementary to $Fx$. Then $B$ is an ideal of $L$ and $L=Fx\dot{+} B$. hence, $Fx$ is a $c$-ideal, by Proposition \ref{one} (iii).
\end{proof}

\begin{propo}\label{three} Let $L=\langle x \rangle$ be a cyclic Leibniz algebra. Then every one-dimensional subalgebra of $L$ is a c-ideal if and only if $\dim L \leq 2$.
\end{propo}
\begin{proof} Suppose that every one-dimensional subalgebra of $L$ is a c-ideal and that $\dim L>1$. If $Fx^2$ is not an ideal of $L$, there is an ideal $B$ of $L$ such that $L=Fx^2+B$. But then $x+\lambda x^2 \in B$ for some $\lambda \in F$, whence $x^2=[x,x+\lambda x^2]\in B$, a contradiction. Thus $Fx^2$ is an ideal of $L$, $x^3=\lambda x^2$ for some $\lambda \in F$ and $L=Fx+Fx^2$. 
\par

Suppose conversely that $\dim L=2$. Then $L=Fx+Fx^2$, where $x^3=0$ or $x^3=x^2$. In the former case, the only one-dimensional subalgebra is $Fx^2$ and that is an ideal of $L$. In the latter case, the one-dimensional subalgebras are $Fx^2$, which is an ideal, and $F(x-x^2)$, which is complemented by $Fx^2$. 
\end{proof}

In \cite{bnt} the following result appears.

\begin{theor}\label{four} Let $L$ be a Leibniz algebra over any field $F$ . Then all one-dimensional subalgebras of $L$ are $c$-ideals of L if and only if:
\begin{itemize}
\item[(i)] $L^3=0$; or
\item[(ii)] $L=A\oplus B$, where $A$ is an abelian ideal of $L$ and $B$ is an almost abelian ideal of $L$.
\end{itemize}
\end{theor}
\begin{proof} See \cite[Theorem 3.2.9, page 26]{bnt}.
\end{proof}
\medskip

Turner defines a subalgebra $B$ of a Leibniz algebra $L$ to be {\em almost abelian} if  $B=Fx\dot{+} D$ where, $D$ is abelian and $[d,x]\subseteq D$ for all $d\in D$. (She actually has the products the other way around as she is dealing with left Leibniz algebras, whereas, here we are concerned with right Leibniz algebras.) However, this definition is problematic as it appears to be assumed in the proof that $[d,x]=d$ for all $d\in D$, and that does not follow from the definition. Also, nothing is said about $[x,d]$. Elsewhere in the literature there have been defined two types of almost abelian Leibniz algebras: $B=Fx\dot{+} D$ is called an {\em almost abelian Lie algebra} if $[d,x]=-[x,d]=d$ for all $d\in D$, and is an {\em almost abelian non-Lie} Leibniz algebra if $[d,x]=d$ for all $d\in D$, all other products being zero in each case.
\par

Moreover, the result is false, as the following example shows.

\begin{ex} Let $L$ be the three-dimensional Leibniz algebra over a field of characteristic different from $2$ with basis $a,b,x$ and non-zero products $a^2=b$, $[a,x]=-[x,a]=\frac{1}{2} a$, $[b,x]=b$. Then $(\alpha a+\beta b+\gamma x)^2=0$ if and only if $\alpha^2=-\beta \gamma$. If $\alpha=0$ then, either $\beta-0$, in which case $Fx$ is complemented by the ideal $Fa+Fb$, or $\gamma=0$, in which case $Fb$ is an ideal of $L$. If $\alpha\neq 0$, then $F(\alpha a +\beta b+ \frac{\alpha^2}{\beta} x)$ is complemented by the ideal $Fa+Fb$.  It follows that every one-dimensional subalgebra is a $c$-ideal. However, $L$ is not of the form given in Theorem \ref{four}.
\end{ex}
\medskip

In fact, the structure of Leibniz algebras in which all one-dimensional subalgebras are $c$-ideals can be more complicated than is claimed by Theorem \ref{four}. The best that we can achieve currently is the following. 

\begin{lemma}\label{five} Let $L$ be a Leibniz algebra in which every one-dimensional subalgebra is a c-ideal. Then 
\begin{itemize}
\item[(i)] all minimal abelian ideals are one dimensional;
\item[(ii)] if $Asoc(L)=Fa_1\oplus \ldots \oplus Fa_r$ and $x\in L$, then $a_i\in Z(L)$ or $[a_i,x]=\lambda_x a_i$ for some $0\neq\lambda_x\in F$, $1\leq i\leq r$.
\item[(iii)] Let $Asoc(L)=Z(L)\oplus D$ where $[a,x]=\lambda_x a$ for all $a\in D$, $x\in L$. Then, either $D=0$ or $L= (Z(L)\oplus D)\dot{+} C \dot{+} Fx$ where $[D,C]=[C,D]=0$, $(Z(L)\oplus D)\dot{+} C$ is an ideal of $L$ and $[a,x]=a$ for all $a\in D$.
\end{itemize}
\end{lemma}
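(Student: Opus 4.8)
The plan is to prove the three parts of Lemma \ref{five} in order, using Proposition \ref{one} as the basic tool throughout, since every one-dimensional subalgebra being a c-ideal means (by that proposition) that each such $Fx$ is either an ideal of $L$ or is complemented by an ideal $B$ with $x\notin L^2$.

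For part (i), I would let $A$ be a minimal abelian ideal and pick any $0\neq a\in A$. Since $A$ is abelian we have $a^2=0$, so $a\in J$, and the subalgebra $Fa$ is one-dimensional. By Proposition \ref{one}, either $Fa$ is an ideal of $L$ or $Fa$ is complemented by an ideal. If $Fa$ is an ideal of $L$, then by minimality $A=Fa$ is one-dimensional and we are done. In the complemented case, I would argue that $Fa$ must already be an ideal: the idea is that $a\in A$ and the complementing ideal $B$ satisfies $L=B\dot{+}Fa$, so intersecting with the ideal $A$ gives $A=(A\cap B)\dot{+}Fa$ with $A\cap B$ an ideal of $L$ contained in $A$; minimality of $A$ then forces $A\cap B=0$, so $A=Fa$ is one-dimensional. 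This is the cleanest route and avoids any products computation.

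For part (ii), write $\mathrm{Asoc}(L)=Fa_1\oplus\cdots\oplus Fa_r$ as a sum of one-dimensional (by part (i)) minimal abelian ideals, and fix $x\in L$ and an index $i$. Since $Fa_i$ is an ideal, $[a_i,x]\in Fa_i$, so $[a_i,x]=\lambda_x a_i$ for some scalar $\lambda_x\in F$ (the scalar depends on $x$ and on $i$, though the statement suppresses the $i$-dependence in its notation). If $\lambda_x=0$ for this $x$ one records that contribution as central; the dichotomy ``$a_i\in Z(L)$ or $[a_i,x]=\lambda_x a_i$ with $\lambda_x\neq0$'' is then just the statement that either $a_i$ is killed by every right multiplication (hence central, using also that $[x,a_i]=0$ because $a_i\in I\subseteq J$ need not hold, so here I must separately check $[x,a_i]$ lands in $Fa_i$ and handle left multiplication), or some $x$ acts on $a_i$ by a nonzero scalar. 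The main care here is to treat both $[a_i,x]$ and $[x,a_i]$ when concluding centrality, and to note that $\mathrm{Asoc}(L)$ being a direct sum of ideals makes each summand invariant.

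For part (iii), assuming $D\neq0$, the goal is the structural decomposition $L=(Z(L)\oplus D)\dot{+}C\dot{+}Fx$. Here I would pick $a\in D$ with $[a,x_0]=\lambda_{x_0}a$, $\lambda_{x_0}\neq0$ for some $x_0$, rescale so that the relevant action is by $1$, and set $C$ to be a complement chosen to absorb the parts of $L$ centralising $D$; the conditions $[D,C]=[C,D]=0$ and $[a,x]=a$ for all $a\in D$ should come out of analysing the scalar actions from part (ii) together with the requirement (from Proposition \ref{one}) that appropriate one-dimensional subalgebras built from elements $a+$(something) either are ideals or have $x\notin L^2$. I expect this final part to be the main obstacle: isolating the single element $x$ and the complement $C$ so that all the claimed bracket relations hold simultaneously will require choosing $x$ to realise the nonzero scalar and then showing every other generator either centralises $D$ or can be adjusted into $C$, and verifying that $(Z(L)\oplus D)\dot{+}C$ is genuinely an ideal. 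The delicate point is reconciling the possibly different scalars $\lambda_x$ for different $x$ acting on a fixed $a\in D$, forcing them into a consistent normalisation $[a,x]=a$, which is exactly the gap Turner's original argument glossed over.
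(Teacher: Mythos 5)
Your part (i) is correct and is essentially the paper's own argument: complement $Fa$ by an ideal, intersect with $A$, and invoke minimality. The trouble begins with part (ii). You read the conclusion as a per-index dichotomy --- ``either $a_i$ is central or some $x$ acts on it by a nonzero scalar'' --- which is nearly vacuous once one knows each $Fa_i$ is an ideal (so that $[a_i,x]\in Fa_i$ automatically). The actual content of (ii), and the only thing part (iii) uses, is what the notation $\lambda_x$ (with no dependence on $i$) signals: for each fixed $x$, all \emph{non-central} summands $Fa_i$ transform by the \emph{same} scalar, so that the non-central part of $\mathrm{Asoc}(L)$ carries a single linear functional $x\mapsto\lambda_x$. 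The paper proves exactly this uniformity: if $[a_i,x]=\lambda a_i$ and $[a_j,x]=\mu a_j$ with $\lambda\neq\mu$, then $F(a_i+a_j)$ is \emph{not} an ideal, so by the c-ideal hypothesis it has an ideal complement $M$; one of $a_i,a_j$, say $a_i$, lies outside $M$, forcing $L=Fa_i\oplus M$ as an algebra direct sum and hence $a_i\in Z(L)$. Nothing in your sketch supplies this; indeed you explicitly defer the ``reconciling of scalars'' to part (iii), where it is also not resolved.

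Part (iii) of your proposal is a plan rather than a proof: phrases such as ``should come out of analysing the scalar actions'' and ``I expect this final part to be the main obstacle'' mark precisely the steps that are absent. The missing idea is simple once (ii) is correctly established: the map $\Lambda\colon L\to F$, $\Lambda(x)=\lambda_x$, is \emph{linear}, so either $\Lambda=0$ (whence $D=0$) or $\mathrm{Ker}\,\Lambda$ has codimension one and one may choose $x$ with $\Lambda(x)=1$, giving simultaneously $L=\mathrm{Ker}\,\Lambda\dot{+}Fx$ and $[a,x]=a$ for all $a\in D$. This one observation disposes of both of your stated worries (``different scalars $\lambda_x$ for different $x$'' and ``isolating the single element $x$''). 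Writing $\mathrm{Ker}\,\Lambda=(Z(L)\oplus D)\dot{+}C$, one then gets $[D,C]=0$ directly from the definition of the kernel, $[C,D]=0$ from a short Leibniz-identity computation (if $[c,d]=\lambda d$ with $c\in\mathrm{Ker}\,\Lambda$, $d\in D$, then $\lambda^2 d=[c,[c,d]]=[c^2,d]-[[c,d],c]=0$, so $\lambda=0$), and $\mathrm{Ker}\,\Lambda$ is an ideal because the same identity shows $\Lambda$ vanishes on $L^2$. Your alternative route --- applying Proposition \ref{one} to one-dimensional subalgebras of the form $F(a+\text{something})$ --- is never developed to the point where it produces the vector $x$, the complement $C$, or the relation $[a,x]=a$, so as it stands parts (ii) and (iii) are genuinely unproved.
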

\begin{proof} \begin{itemize}\item[(i)] Let $A$ be a minimal abelian ideal of $L$ and let $a\in A$. If $A\neq Fa$ then there is an ideal $K$ of $L$ such that $L=Fa\dot{+} K$. But $A\cap K=0$ so $A=Fa$, a contradiction.
\item[(ii)] Suppose that $[a_i,x]=\lambda a_i$ and $[a_j,x]=\mu a_j$ where $\lambda\neq \mu$. Then $F(a_i+a_j)$ is not an ideal of $L$ and so there is an ideal $M$ of $L$ such that $L=F(a_i+a_j)\dot{+} M$. Clearly, one of $a_i$ and $a_j$ does not belong to $M$. Suppose that $a_i\notin M$, so $L=Fa_i\oplus M$ and $a_i\in Z(L)$.
\item[(iii)] Let $\Lambda : L \rightarrow F$ be given by $\Lambda (x) = \lambda_x$. This is a linear transformation. Hence, either Im $\Lambda = 0$, in which case $D = 0$, or else $L =$ Ker $\Lambda \dot{+} Fx$ and $\lambda_x = 1$. Put $L = (Z \oplus D) \dot{+} C \dot{+} Fx$, where $C \subseteq$ Ker $\Lambda$. Let $d\in D$, $c\in C$ with $[c,d]=\lambda d$. Then $$\lambda^2 d=\lambda [c,d]=[c[c,d]]=[c^2,d]-[[c,d],c]=0,$$ so $\lambda=0$. Hence $[D,C]=[C,D]=0$. It is straightforward to check that Ker $\Lambda$ is an ideal of $L$.
\end{itemize}
\end{proof}
\medskip

However, we can retrieve Theorem \ref{four} for symmetric Leibniz algebras.

\begin{theor}\label{symm} Let $L$ be a symmetric Leibniz algebra over any field $F$ . Then all one-dimensional subalgebras of $L$ are $c$-ideals of $L$ if and only if:
\begin{itemize}
\item[(i)] $L^3=0$; or
\item[(ii)] $L=A\oplus B$, where $A$ is an abelian ideal of $L$ and $B$ is an almost abelian Lie ideal of $L$.
\end{itemize}
\end{theor}
\begin{proof} Suppose that  all one-dimensional subalgebras of $L$ are $c$-ideals of L. First note that, if $x,y\in L$, then $[x,y]^2\in L^2\cap J$, so $L^2\subseteq Asoc(L)$, by Corollary \ref{two}. Also, $L$ must have the structure given in Lemma \ref{five} (iii). If $D=0$ then $L^2\subseteq Z(L)$ and (i) holds. So suppose that $D\neq 0$.
\par

Let $0\neq a\in D$. If $z\in Z$, then $F(z+a)$ is not an ideal of $L$ and so $z+a\not \in L^2$. But $a=[a,x]\in L^2$, so $z\not \in L^2$ and $Z\cap L^2=0$. It follows that $L^2=D$. Now, if $c\in C$, we have that $c^2\in L^2=D$, so $c^3=c^2$. But $c^3=0$, by \cite[Proposition 2.17]{feld}. If $[c,x]=0$, then $[x,c]\in L^2=D$ and so $[x,c]=[[x,c],x]=[x,[c,x]]=0$, since $L$ is also a flexible algebra, by  \cite[Proposition 2.17]{feld} again. Hence $Fc$ is an ideal of $L$. But then $c\in L^2\cap C=D\cap C=0$.
\par

So suppose that $[c,x]\neq 0$. Then $[c-[c,x],x]=0$ and $[x,c-[c,x]]=[x,c]-[x,[c,x]]=[x,c]-[[x,c],x]=0$, using the flexible law again. Also, $[c-[c,x],c-[c,x]]=-[c,[c,x]]-[[c,x],c]=-[c,[c,x]]-[c,[x,c]]=0$, since $[c,x]+[x,c]\in I$. It follows that $F(c-[c,x])$ is an ideal of $L$ and hence is inside $L^2=D$. Thus, $c\in C\cap D=0$, so $C=0$.
\par

Finally, for all $a\in D$, $[a,x]=a$. Also, $[a,x]+[x,a]\in I$, so $0= [[a,x],x]+[[x,a],x]=[a,x]+[x,a]$ and $[x,a]=-a$. Thus, $L$ is as described in (ii).
\end{proof}

\end{document}